\begin{document}

\title[Equidistribution of Diophantine pairs]{Equidistribution of Diophantine pairs among the equivalence classes of quadratic forms}

\author{Goran Dra\v{z}i\'{c}}
\address{Goran Dra\v{z}i\'{c}, Faculty of Food Technology and Biotechnology, University of Zagreb, Croatia}
\email{Goran.Drazic@pbf.unizg.hr}

\author{Matija Kazalicki}
\address{Matija Kazalicki, Faculty of Science, Department of Mathematics, University of Zagreb, Croatia}
\email{Matija.Kazalicki@math.hr}

\author{Rudi Mrazovi\'{c}}
\address{Rudi Mrazovi\'{c}, Faculty of Science, Department of Mathematics, University of Zagreb, Croatia}
\email{Rudi.Mrazovic@math.hr}

\newcommand{\goran}[1]{\authorcomment{Goran}{#1}}
\newcommand{\matija}[1]{\authorcomment{Matija}{#1}}
\newcommand{\rudi}[1]{\authorcomment{Rudi}{#1}}

\begin{abstract}
    For a fixed integer $n$, we say that $\{a,c\}\subset\mathbf{Z}\setminus\{0\}$ is a $D(n)$-pair if $ac+n$ is a perfect square.
    In this short note we prove that $D(n)$-pairs are asymptotically equidistributed (via their associated quadratic forms) among proper $\mathrm{SL}_2(\mathbf{Z})$-equivalence classes of binary quadratic forms of discriminant $4n$ with fixed content. As a consequence, we obtain a more streamlined and simpler proof of Badesa's asymptotic formula for the number of $D(n)$-pairs.
\end{abstract}

\maketitle

\section{Introduction}

Let $R$ be a ring and $n \in R$.
A $D(n)$-$m$-tuple in $R$ is a set of $m$ distinct nonzero elements such that the product of any two of them plus $n$ is a perfect square.
When $R = \Z$ (or $\Q$) and $n=1$, such sets are called (rational) Diophantine $m$-tuples, and they have been studied since antiquity.

The first known rational Diophantine quadruple was found by Diophantus:
\[
    \Bigl\{\frac{1}{16},\, \frac{33}{16},\, \frac{17}{4},\, \frac{105}{16}\Bigr\}.
\]
In the integer setting, Fermat discovered the quadruple $\{1,3,8,120\}$, which Euler extended to the rational quintuple
\[
\left\{1, 3, 8, 120, \frac{777480}{8288641}\right\}.
\]
Stoll~\cite{Stoll} later proved that this is the only such extension.

In 1969 Baker and Davenport~\cite{B-D} showed that $d=120$ is the only integer that completes $\{1,3,8\}$ to a Diophantine quadruple. This led to the conjecture---recently confirmed by He, Togb\'e, and Ziegler \cite{HTZ} (see also \cite{duje-crelle})---that there are no Diophantine quintuple in integers.

For rational tuples, the situation is more complicated. 
In 1999, Gibbs~\cite{Gibbs1} found the first rational sextuple.
Later, Dujella, Kazalicki, Miki\'c, and Szikszai~\cite{DKMS} proved that there are infinitely many such sextuples. 
Dujella and Kazalicki~\cite{Duje-Matija} introduced new parametric constructions,
while Dujella, Kazalicki, and Petri\v{c}evi\'c~\cite{DKP-sext,DKP-reg} found sextuples with square denominators and regular substructures. 
No rational septuple is currently known,
and Lang's conjecture suggests that there should be a uniform bound on the size of such sets.

Beyond integers and rationals, 
Diophantine $m$-tuples have also been studied in other rings.
Dujella and Kazalicki~\cite{DK-finite} computed their number over finite fields.

Dra\v{z}i\'c and Kazalicki~\cite{DrazicKazalicki} studied rational $D(n)$-quadruples with fixed product via elliptic curves. 
It remains open whether rational $D(n)$-quintuples exist for all $n$.
However, assuming the Parity Conjecture for certain families of elliptic curves,
Dra\v{z}i\'c \cite{Drazic} proved that for at least 99.5\% of squarefree integers $n$, 
there exist infinitely many rational $D(n)$-quintuples. 
When $n$ is not a square,
no rational $D(n)$-sextuple is known.

For further details, see the survey article~\cite{Duje-Notices} and the book~\cite{duje-book}.

In this note, we study the asymptotic behaviour, as $T\to\infty$, of the number of $D(n)$-pairs $\{a, b\}$ with $a, b \in \{-T, \ldots, T\}$.
Dujella~\cite{Duje-asymptotic} showed that the number of $D(1)$-pairs grows like $\frac{12}{\pi^2} T \log T$. 
Adžaga, Dražić, Dujella, and Pethő~\cite{ADDP} proved that for $n = -1$ or for $|n|$ prime,
the number of $D(n)$-pairs grows is asymptotic to $kT$,
where $k$ is an explicit constant involving special values of Dirichlet $L$-functions. 
More recently, Badesa~\cite{Badesa} determined the asymptotics for arbitrary integers $n$. 

The common approach in these works is to estimate the number of solutions to the congruence $x^2 \equiv n \pmod{b}$, with $b$ ranging from $1$ to $T$. 
Dirichlet $L$-functions then arise naturally, 
and this makes the constant rather involved to compute---especially for general $n$.

We revisit the problem from a new perspective. 
Our starting point is an observation by Badesa:
to each $D(n)$-pair $\{a, c\}$, one can associate the integral binary quadratic form
\[
    E_{ac} := [a, 2\sqrt{ac+n}, c]
    \qquad \text{with } a > c.
\]
The form $E_{ac}$ lies in a proper equivalence class (that is, equivalent under the $\SL_2(\Z)$-action) of forms with discriminant $4n$.
This naturally leads to the question of how $D(n)$-pairs are distributed among these equivalence classes.

\section{Statement of the main result}

To state our main result, fix an integral binary quadratic form $Q$ of discriminant $4n$, and define
\[
    D^Q_T 
    := \#\bigl\{
        \{a,c\}
        \,\big|\,
        a, c \in \{-T,\ldots,T\},\ \{a,c\} \text{ is a $D(n)$-pair},\ E_{ac} \sim Q
    \bigr\}.
\]
Thus $D_T^Q$ counts those $D(n)$-pairs $\{a,c\}$ whose associated quadratic form $E_{ac}$ is properly equivalent to $Q$.
Our main theorem shows that, for fixed content, $D(n)$-pairs become asymptotically equidistributed among proper equivalence classes of binary quadratic forms.

\begin{theorem}
    \label{thm:main}
    Let $n$ be a nonzero integer,
    and let $Q=[a,b,c]$ be a binary quadratic form of discriminant $4n$ and content $k$;
    equivalently, $Q/k$ is primitive of discriminant $d' = 4n/k^2$.

    \begin{itemize}
        \item[(a)] 
        If $n<0$, then%
        \footnote{As usual, $f(T)=o(g(T))$ means that $f(T)/g(T)\to 0$ as $T\to\infty$.}
        \[
            D_T^Q = \frac{6T}{\omega(d') \pi |n|^{1/2}} + o(T),
        \]
        where
        \[
        \omega(d') = \begin{cases}
            4 &\text{if } d'=-4, \\
            6 &\text{if } d'=-3, \\
            2 &\text{otherwise}.
        \end{cases}
        \]
    
        \item[(b)] 
        If $n>0$ is not a square, then
        \[
            D_T^Q = \frac{12T \log{\epsilon_{d'}}}{\kappa(d')\pi^2n^{1/2}} + o(T), 
        \]
        where $\epsilon_{d'}>1$ is the fundamental unit of the order $\mathcal{O}_{d'} = \Z[\frac{d'+\sqrt{d'}}{2}]$ in the quadratic field $\Q(\sqrt{4n})$.
        Here $\kappa(d')=\frac{h^+(d')}{h(d')}$ is the ratio of the narrow class number and the class number of $\mathcal{O}_{d'}$.

        \item[(c)]
        If $n>0$ is a square, then
        \[
            D_T^Q = \frac{6T \log T}{\pi^2 n^{1/2}} + o(T \log T).
        \]
    \end{itemize}
\end{theorem}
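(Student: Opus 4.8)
The plan is to reduce everything to a lattice-point count inside the $\mathrm{SL}_2(\mathbf{Z})$-orbit of $Q$, where the asserted equidistribution becomes essentially automatic. First I would record the bijection already implicit in the definition of $E_{ac}$: a $D(n)$-pair $\{a,c\}$ with $ac+n=b^2$, $b\ge 0$, and $a>c$ is the same datum as the form $[a,2b,c]$ of discriminant $4n$, so that
\[
    D_T^Q=\#\bigl\{[a,2b,c]\in[Q]\ :\ |a|,|c|\le T,\ a>c,\ b\ge 0\bigr\}
\]
up to a negligible number of degenerate pairs. Dividing every coefficient by the content $k$ identifies this with the same count for the primitive class $[Q/k]$, discriminant $d'$, and bound $T/k$; so it suffices to treat primitive forms and then substitute $T\mapsto T/k$ and $|n|^{1/2}\mapsto k|d'|^{1/2}/2$, which one checks reproduces the stated constants. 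The structural point is that we count the orbit of a \emph{single} class directly, so the leading term will manifestly depend only on $d'$ (and on $\omega$ or $\kappa$), never on the chosen class; this is the equidistribution, obtained without invoking any deep equidistribution theorem.

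Next I would fix a base form $Q_0\in[Q]$ and parametrize $[Q]$ as $Q_0\circ g$, $g\in\mathrm{SL}_2(\mathbf{Z})$, noting that the outer coefficients are $Q_0(ge_1)$, $Q_0(ge_2)$ and that two matrices give the same form exactly when they differ by left multiplication by $\mathrm{Aut}(Q_0)$. Thus, up to the finite factor $|\mathrm{Aut}(Q_0)|$ and a factor $4$ for the conventions $a>c$ and $b\ge 0$ (the four sign-and-order sectors being asymptotically equinumerous, verified by applying the estimate below sector-by-sector), $D_T^Q$ is the number of ordered unimodular bases $(v_1,v_2)$ of $\mathbf{Z}^2$ with $|Q_0(v_i)|\le T$. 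In case (a) the form is definite, $|\mathrm{Aut}(Q_0)|=\omega(d')$, and this is elementary geometry of numbers: the primitive $v_1$ with $Q_0(v_1)\le s$ number $\sim\tfrac{6}{\pi^2}\cdot\tfrac{\pi s}{|n|^{1/2}}=\tfrac{6s}{\pi|n|^{1/2}}$, while for each such $v_1$ the coset $\det(v_1,\cdot)=1$ contains $\approx 2\sqrt{T/Q_0(v_1)}$ admissible $v_2$; hence the count is $\int_0^T 2\sqrt{T/s}\,\tfrac{6}{\pi|n|^{1/2}}\,ds=\tfrac{24T}{\pi|n|^{1/2}}$, and dividing by $4\omega(d')$ yields (a). Here the $6/\pi^2$ of the theorem is exactly the primitivity factor $1/\zeta(2)$, combined with the ellipse area $\pi/|n|^{1/2}$.

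For case (b) the same bijection applies but $Q_0$ is indefinite, so $\mathrm{Aut}(Q_0)$ is infinite: modulo $\pm I$ it is generated by the hyperbolic automorph attached to the fundamental unit $\epsilon_{d'}$. The ellipse is replaced by the two branches of a hyperbola, and one must count primitive $v_1$ with $|Q_0(v_1)|\le T$ modulo this $\mathbf{Z}$-action; a fundamental domain for it has ``length'' $\log\epsilon_{d'}$ on each level set, which is the source of that factor, while primitivity again contributes $1/\zeta(2)=6/\pi^2$. The ratio $\kappa(d')=h^+(d')/h(d')$ enters through the orientation bookkeeping: it records whether the two signs of $Q_0$ (equivalently the two branches) correspond to one wide class or two, i.e.\ precisely the narrow-versus-wide class number discrepancy, and it is trivial in the definite case, which is why it is absent from (a). Assembling these pieces gives the constant in (b). I expect this to be the main obstacle: the relevant region is non-compact along the hyperbola, so the truncation $|a|,|c|\le T$ must be handled with care and its error controlled, and matching the narrow class group to the sign/orientation count so as to extract the exact factor $\kappa(d')$ is the delicate step.

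Finally, case (c) is the degenerate regime: when $n$ is a perfect square the discriminant $4n$ is a square and the forms factor into rational linear forms, so $ac=(b-\sqrt n)(b+\sqrt n)$ and the bounded-coefficient count collapses to a divisor-type sum, the extra $\log T$ being its signature; the content/primitivity condition again produces $6/\pi^2=1/\zeta(2)$, yielding $\tfrac{6T\log T}{\pi^2 n^{1/2}}$. This case, like the definite estimate of (a), is essentially elementary once the bijection is in place; the genuine difficulty is concentrated in the indefinite count of (b).
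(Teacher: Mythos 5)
Your opening reduction---identifying $D_T^Q$ with the count of forms $[a,2b,c]$ in the class of $Q$ satisfying $|a|,|c|\le T$, $a>c$, $b\ge 0$---is exactly the paper's first step, and your main-term bookkeeping in case (a) does reproduce the correct constant. But the elementary route you then take (fix a primitive $v_1$, count the admissible $v_2$ on the affine line $\det(v_1,\cdot)=1$, and sum) has a gap that is not a technicality: it is the entire difficulty of the problem. For each primitive $v_1$ with $Q_0(v_1)=s$, the admissible $v_2$ are the integers in an interval of length $\approx 2\sqrt{T/s}$, so each line contributes an error of size $O(1)$; but there are $\asymp T$ lines with $s\asymp T$, and for those the interval length is itself $O(1)$. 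Hence the trivial error bound $O(\#\{v_1\})=O(T)$ is of the \emph{same order} as the main term $\frac{24T}{\pi|n|^{1/2}}$, and your integral $\int_0^T 2\sqrt{T/s}\cdot\frac{6}{\pi|n|^{1/2}}\,ds$ is not an asymptotic until you exhibit cancellation among the per-line errors. Concretely, the error on the line attached to $v_1$ is governed by the fractional part of $b_0(v_1)/(2s)$, where $b_0$ is the root of $b^2\equiv 4n\pmod{4s}$ singled out by $v_1$; proving cancellation is essentially the equidistribution of roots of quadratic congruences restricted to a fixed class, a deep input. The same issue infects your ``four sectors are asymptotically equinumerous'' step: $b\mapsto -b$ does not preserve the class (it sends $[Q]$ to its inverse in the class group), so the even split between $b\ge 0$ and $b<0$ \emph{within a fixed class} is again a per-line statement whose accumulated error is not negligible. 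The paper avoids all of this by invoking Eskin--McMullen: mixing of the $G$-action on $\Gamma\backslash G$ is precisely the mechanism that forces these rounding/boundary errors to cancel, after which only volume computations remain ($m_{G/H}(\cB_T)$, the covolume of $\Gamma\cap H$ in $H$, and $m_G(\Gamma\backslash G)=\pi^2/6$).

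In cases (b) and (c) the gaps are larger still. In (b) you correctly locate the crux (the infinite automorphism group and the factor $\kappa(d')$) but leave it unresolved: the generator of the proper automorphism group corresponds to $\epsilon_{d'}^{\,2/\kappa(d')}$ rather than to $\epsilon_{d'}$, and the stated constant emerges only after combining the covolume $2\log\epsilon_{d'}^{\,2/\kappa(d')}$ of $(\Gamma\cap H)\backslash H$ with the truncated-region volume $T/(2n^{1/2})$ and with $\pi^2/6$; ``orientation bookkeeping'' is not a proof of this. In (c) the stabilizer is no longer a lattice in $H$, which is why the paper must replace Eskin--McMullen by the Oh--Shah theorem there; a divisor-sum argument would moreover have to isolate a single one of the $2k$ classes of discriminant $4k^2$, which you do not address. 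In short, the proposal is a sound heuristic for the shape of all three constants, but in each case the step that actually establishes the asymptotic is missing.
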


The first, trivial, step in the proof of Theorem \ref{thm:main} is to rephrase everything in the language of quadratic forms.
Indeed, a simple bijection shows that $D_T^Q$ agrees with
\[
    F^Q_T
    :=\# \bigl\{ [a,b,c] \,\big|\, a, c \in \{-T,\ldots,T\}, a > c, b \geq 0, [a,b,c] \sim Q \bigr\}.
\]
With this easy observation in hand,
Theorem \ref{thm:main} follows from the corresponding orbit-counting statement for quadratic forms:

\begin{theorem}
    \label{thm:mainF}
    Under the same assumptions and notation as in Theorem~\ref{thm:main}, all asymptotic formulas in Theorem~\ref{thm:main} remain valid with $D_T^Q$ replaced by $F_T^Q$. In other words, in each of the cases \emph{(a)--(c)} of Theorem~\ref{thm:main}, the quantity $F_T^Q$ satisfies the corresponding asymptotic stated there for $D_T^Q$.
\end{theorem}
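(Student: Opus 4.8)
The plan is to interpret $F_T^Q$ as an orbit-counting quantity and carry out an explicit lattice-point count, handling the definite, indefinite non-square, and square cases separately. Two preliminary reductions apply throughout. First, since $\SL_2(\Z)$-equivalence preserves content, every form in the class of $Q$ has content $k$, and $[a,b,c]=k[a/k,b/k,c/k]$ sets up a bijection between the forms counted by $F_T^Q$ and the primitive forms equivalent to $Q/k$ with $|a/k|,|c/k|\le T/k$; since all three formulas are exactly compatible with $T\mapsto T/k$ together with $|n|^{1/2}=\tfrac{k}{2}|d'|^{1/2}$, it suffices to take $k=1$, so $Q$ is primitive of discriminant $4n$. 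Second, writing a form in the class as $Q\circ g$ with $g\in\SL_2(\Z)$ and using orbit--stabilizer, the forms correspond to right cosets $\mathrm{Aut}(Q)\backslash\SL_2(\Z)$; if $v_1,v_2$ are the columns of $g$ and $B$ is the bilinear form attached to $Q$, then $Q\circ g=[\,Q(v_1),\,B(v_1,v_2),\,Q(v_2)\,]$, so the constraints defining $F_T^Q$ become $|Q(v_1)|,|Q(v_2)|\le T$, $Q(v_1)>Q(v_2)$, and $B(v_1,v_2)\ge0$. The key tool is the identity $Q(v_1)Q(v_2)-\tfrac14 B(v_1,v_2)^2=-n$, valid for any basis $(v_1,v_2)$ of $\Z^2$.

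\emph{The definite case (a).} Here $-n=|n|>0$, $Q$ is positive definite, $\mathrm{Aut}(Q)$ has order $\omega(d')$, and $F_T^Q=\tfrac1{\omega(d')}$ times the number of positively oriented bases with $0<Q(v_2)<Q(v_1)\le T$ and $B(v_1,v_2)\ge0$. I would fix a primitive $v_1$ with $Q(v_1)\le T$; then admissible second columns are $v_2=v_2^0+mv_1$, and by the identity the conditions $Q(v_2)<Q(v_1)$ and $B(v_1,v_2)\ge0$ read $0\le B(v_1,v_2)<2\sqrt{Q(v_1)^2-|n|}$. As $m$ varies, $B(v_1,v_2^0+mv_1)$ runs through an arithmetic progression of common difference $2Q(v_1)$, so this window contains on average exactly one value of $m$. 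Summing over primitive $v_1$, and using that the number of such $v_1$ is $\tfrac{6}{\pi^2}\cdot\tfrac{\pi T}{|n|^{1/2}}+o(T)$ (the ellipse $Q\le T$ has area $\pi T/|n|^{1/2}$, and the primitivity sieve contributes $1/\zeta(2)=6/\pi^2$), gives $F_T^Q=\tfrac{1}{\omega(d')}\cdot\tfrac{6T}{\pi|n|^{1/2}}+o(T)$, which is exactly (a); contributions from $v_1$ with $Q(v_1)^2\le|n|$ and boundary effects are $O(|n|^{1/2})=o(T)$.

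\emph{The indefinite cases (b) and (c).} When $n>0$ the quadric $b^2-4ac=4n$ is a one-sheeted hyperboloid, $Q$ is indefinite, so $Q(v_1),Q(v_2)$ take both signs, and, crucially, $\mathrm{Aut}(Q)$ is infinite: modulo $\pm I$ it is generated by the hyperbolic automorph attached to $\epsilon_{d'}$. The column parametrization still applies, but one must now count bases modulo this infinite group. I would pass to a fundamental domain for the cyclic group generated by the fundamental automorph acting on the primitive $v_1$, where a logarithmic change of coordinates turns the automorph into a translation of length $\log\epsilon_{d'}$; the number of $v_1$ in a fundamental domain satisfying the size constraints is then $\sim c\,T\log\epsilon_{d'}$, while the factor $\kappa(d')=h^+(d')/h(d')$ enters through the orientation bookkeeping distinguishing proper $\SL_2(\Z)$-classes from the coarser identification by the full automorph group (i.e.\ whether $-1$ is a relative norm). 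Together with the primitivity sieve and the $\pi^2$-factor from $\sum 1/m^2$, this yields the constant $\tfrac{12\log\epsilon_{d'}}{\kappa(d')\pi^2 n^{1/2}}$ of (b). In the square case $n=n_0^2$ the discriminant is a perfect square, $Q$ splits into two integral linear factors and the quadric degenerates, so the fundamental-unit mechanism fails; instead $ac+n=s^2$ factors as $ac=(s-n_0)(s+n_0)$, and the count reduces to a divisor-type sum $\sum_{m\le T}d(\cdots)$ with main term $\tfrac{6}{\pi^2 n^{1/2}}\,T\log T$, the extra logarithm coming precisely from this degeneration.

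\emph{Main obstacle.} The routine part is case (a), which the above converts into a clean lattice-point count, and the square case (c), which becomes elementary once the factorization is used. The genuine difficulty is case (b): making the passage to a fundamental domain of the infinite automorph group rigorous with an acceptable error term, and above all pinning down the constant, i.e.\ correctly producing $\log\epsilon_{d'}$ and $\kappa(d')=h^+(d')/h(d')$. The narrow-class-number ratio is subtle because it records exactly when the $\SL_2(\Z)$-class of $Q$ is strictly finer than its $\mathrm{GL}_2(\Z)$-class, which governs how the sign conditions $Q(v_1)>Q(v_2)$ and $B(v_1,v_2)\ge0$ interact with the automorph orbit; getting this factor right, uniformly over the class of $Q$, is where I expect the bulk of the work to lie.
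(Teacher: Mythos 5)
Your proposal replaces the paper's homogeneous-dynamics argument (Eskin--McMullen in cases (a)--(b), Oh--Shah in case (c)) by a direct lattice-point count over the column parametrization $g\mapsto(v_1,v_2)$, and in the definite case this genuinely works: the identity $B(v_1,v_2)^2-4Q(v_1)Q(v_2)=4n$ turns the constraints into the window $0\le B(v_1,v_2)<2\sqrt{Q(v_1)^2-|n|}$ inside an arithmetic progression of modulus $2Q(v_1)$, and one can even check that for $Q(v_1)>2|n|$ this window contains \emph{exactly} one term of the progression (not merely one ``on average''---that refinement is needed, because an $O(1)$ error per $v_1$ summed over $\asymp T$ primitive vectors would swamp the main term). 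With the primitive-point count in the ellipse this recovers (a).

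The gaps are in (b) and (c). In (b), the implicit structural claim---that after passing to a fundamental domain for the automorph group the count is ``one $v_2$ per $v_1$'', so that $F_T^Q$ equals the number of admissible $v_1$---is false. Carrying out your own window computation: for $Q(v_1)=a_1>0$ the admissible $B$ lie in $[0,2\sqrt{a_1^2+n})$, giving $\approx 1$ completion, but for $a_1<0$ the conditions $Q(v_2)<a_1$ and $Q(v_2)\ge -T$ give the window $(2\sqrt{a_1^2+n},\,2\sqrt{n+|a_1|T}\,]$, hence $\approx\sqrt{T/|a_1|}-1$ completions, which is large when $|a_1|=o(T)$. Each of the two regimes contributes exactly half of the main term $\frac{12T\log\epsilon_{d'}}{\kappa(d')\pi^2 n^{1/2}}$ (the negative-$a_1$ half arises from $\int_0^1(\alpha^{-1/2}-1)\,d\alpha=1$), so the constant cannot be extracted from a one-dimensional count of $v_1$'s plus ``orientation bookkeeping''; a genuine two-variable computation, together with control of the per-$v_1$ rounding errors and of the non-invariance of the box $|Q(v_1)|\le T$ under the automorph acting on $v_1$ alone, is the actual content of the proof and is absent from the sketch. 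In (c), reducing $ac+n=s^2$ to a divisor sum computes the \emph{total} count over all $2k$ proper classes of discriminant $4k^2$ (this is essentially Dujella's and Badesa's argument), whereas the statement to be proved is the asymptotic for a single fixed class $Q$; the equidistribution among the $2k$ classes---which is the whole point of the theorem and which the paper obtains from Oh--Shah---is not addressed at all by the factorization. So case (a) stands as a correct alternative route, but (b) and (c) are not proved.
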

We prove Theorem~\ref{thm:mainF} in Sections~\ref{sec:homdyn}--\ref{sec:split}.
For now, we explain how summing over all proper equivalence classes recovers Badesa’s asymptotic formula~\cite{Badesa} (with an alternative expression for its leading coefficient).

\section{A simpler proof of Badesa's count of Diophantine pairs}

In this section we explain how our homogeneous-dynamics approach to Theorem~\ref{thm:main} also yields a substantially simpler (in our view, both conceptually and technically) proof of Badesa’s asymptotic for the number of $D(n)$-pairs. 

In fact, once one is comfortable with the homogeneous-dynamics viewpoint, the argument becomes almost immediate, yet this perspective has not been used before for counting $D(n)$-pairs.

Recall that Badesa’s proof begins by reducing the count of $D(n)$-pairs $\{a,c\}$ with $|a|,|c|\leq T$ to a congruence-counting problem%
\footnote{As usual, $a_T \sim b_T$ means that $a_T/b_T\to 1$ as $T\to\infty$.}
\[
    D_T^n \sim 2\sum_{t\leq T} S(n,t),
\]
where $S(n,t)$ denotes the number of solutions to $x^{2}\equiv n \pmod t$ (with a minor adjustment for the trivial solutions when $n$ is a square).
He then obtains an asymptotic for $\sum_{t\leq T}S(n,t)$ by analyzing an associated Dirichlet series with Euler product, relating it to $L(s,\chi_n)$, and finally applying a Tauberian theorem to extract the main term.

Obviously, to recover Badesa’s result from Theorem~\ref{thm:main}, it suffices to sum the asymptotic for $D_T^Q$ over all proper equivalence classes of binary quadratic forms of discriminant $4n$. 
We denote the set of these classes by $\cC(4n)$. 
Recall that, for fixed signature and content $k$, these classes are in bijection with the narrow class group $\Cl^+(4n/k^2)$, whose order we denote by $h^+(4n/k^2)$.

In general, for any discriminant $d$ which is not a perfect square, we have
\[
    h^+(d) = 
    \begin{cases}
        h(d), & \text{if } N_{K/\Q}(\epsilon_d) = -1, \\
        2h(d), & \text{if } N_{K/\Q}(\epsilon_d) = +1,
    \end{cases}
\]
where $\epsilon_d$ is a fundamental unit of the order $\cO_d = \Z\bigl[\frac{d+\sqrt{d}}{2}\bigr]$ in the quadratic field $\Q(\sqrt{d})$, and $h(d)$ is the (proper) class number of quadratic forms of discriminant $d$. 
Write $h^+(d)=\kappa(d)h(d)$, where $\kappa(d)\in \{1,2\}$. 
If $d<0$ then $h(d)=h^+(d)$. 
We will also use the following relation between the class numbers of non-maximal orders,
the ring class number formula:
\begin{equation}
    \label{eq:ring}
    h(d_0 f^2)=\frac{h(d_0)f}{[\cO_{d_0}^\times:\cO_{d_0 f^2}^\times]} \prod_{p|f} \Bigl(1-\chi_{d_0}(p)\frac{1}{p}\Bigr),
\end{equation}
where $d_0$ is a fundamental discriminant and $\cO_{d}^\times$ denotes the unit group of the order $\cO_d$.

We are now ready to recover the results of \cite{Duje-asymptotic,ADDP,Badesa}.

\begin{theorem}
    \label{thm:total-count}
    Let $n$ be a nonzero integer.
    \begin{enumerate}
        \item[(a)]
        Assume $n<0$.
        If $4n=f^2 d_0$ with $d_0<0$ a fundamental discriminant, then
        \[
            D^n_T
            = \frac{12T}{\omega(d_0)\pi |n|^{1/2}}h(d_0) \Bigl[  \prod_{p^e || f} \bigl( \sigma_1(p^e) - \chi_{d_0}(p) \sigma_1(p^{e-1}) \bigr)\Bigr]+o(T).
        \]

        \item[(b)]
        Assume $n>0$ is not a square.
        If $4n=f^2 d_0$ with $d_0 > 0$ a fundamental discriminant, then
        \[
            D^n_T
            = \frac{12T \log{\epsilon_{d_0}}h(d_0)}{\pi^2 n^{1/2}} \Bigl[  \prod_{p^e || f} \bigl( \sigma_1(p^e) - \chi_{d_0}(p) \sigma_1(p^{e-1}) \bigr) \Bigr]+o(T),
        \]
        where $\log \epsilon_{d_0}$ is the regulator of the real quadratic field $\Q(\sqrt{d_0})$.
        
        \item[(c)]
        If $n>0$ is a perfect square, 
        then
        \[
            D^n_T
            = \frac{12T \log T}{\pi^2}+o(T \log T).
        \]
    \end{enumerate}
\end{theorem}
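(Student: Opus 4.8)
The plan is to obtain $D_T^n$ by summing the per-class asymptotics of Theorem~\ref{thm:main} over all classes $Q\in\cC(4n)$: every $D(n)$-pair lies in exactly one proper equivalence class, so $D_T^n=\sum_{Q\in\cC(4n)}D_T^Q$. Since $\cC(4n)$ is finite, the finitely many error terms add up to an error of the same order ($o(T)$, resp.\ $o(T\log T)$), and it suffices to add the main terms. I would organize the sum by the content $k$: writing $4n=d_0f^2$ with $d_0$ fundamental, the reduced discriminant of a content-$k$ class is $d'=4n/k^2=d_0(f/k)^2$, and since $d'$ must again be a discriminant the admissible $k$ are exactly the divisors of $f$; setting $g=f/k$ this ranges over $g\mid f$. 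By the bijection recalled in the excerpt, for fixed signature and content there are $h^+(d')$ classes. The one point requiring care is part~(a): for $n<0$ the forms are definite, so each content contributes \emph{two} signatures (positive and negative definite), giving $2h^+(d')=2h(d')$ classes, whereas for $n>0$ the indefinite forms give a single family of $h^+(d')$ classes. This signature doubling accounts for the extra factor of $2$ in part~(a) (turning the $6$ of Theorem~\ref{thm:main}(a) into a $12$); in part~(b) no doubling occurs, and in part~(c) the doubling is absorbed into the class count below.

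For parts (a) and (b) I would insert Theorem~\ref{thm:main} and watch the unit factors cancel. In~(a) the summand over $g\mid f$ is $2h(d_0g^2)\cdot\frac{6T}{\omega(d_0g^2)\pi|n|^{1/2}}$; using $\omega(d_0g^2)=\omega(d_0)/[\cO_{d_0}^\times:\cO_{d_0g^2}^\times]$ together with the ring class number formula~\eqref{eq:ring}, the index $[\cO_{d_0}^\times:\cO_{d_0g^2}^\times]$ cancels and leaves $\frac{12T}{\pi|n|^{1/2}}\cdot\frac{h(d_0)}{\omega(d_0)}\,g\prod_{p\mid g}(1-\chi_{d_0}(p)/p)$. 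In~(b) the summand is $h^+(d_0g^2)\cdot\frac{12T\log\epsilon_{d_0g^2}}{\kappa(d_0g^2)\pi^2n^{1/2}}$; here $h^+=\kappa h$ cancels $\kappa$, and the regulator relation $\log\epsilon_{d_0g^2}=[\cO_{d_0}^\times:\cO_{d_0g^2}^\times]\log\epsilon_{d_0}$ combines with~\eqref{eq:ring} so that the index again cancels, leaving $\frac{12T\log\epsilon_{d_0}}{\pi^2n^{1/2}}\,h(d_0)\,g\prod_{p\mid g}(1-\chi_{d_0}(p)/p)$. In both cases the problem reduces to the single divisor sum $\sum_{g\mid f}g\prod_{p\mid g}(1-\chi_{d_0}(p)/p)$.

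It remains to evaluate this sum. I would note that $g\mapsto g\prod_{p\mid g}(1-\chi_{d_0}(p)/p)$ is multiplicative, so the divisor sum is multiplicative in $f$ and factors over the prime powers $p^e\Vert f$. A short prime-power computation, using $\sigma_1(p^e)-1=p\,\sigma_1(p^{e-1})$, gives $1+\sum_{j=1}^{e}p^j(1-\chi_{d_0}(p)/p)=\sigma_1(p^e)-\chi_{d_0}(p)\sigma_1(p^{e-1})$, which is exactly the Euler factor in the statement. Substituting back yields the asymptotics claimed in (a) and (b).

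Finally, for part~(c) with $n=s^2$ and $4n=(2s)^2$ a perfect square, the ring class number machinery degenerates, and I would argue instead by a direct class count. Each class still contributes $\frac{6T\log T}{\pi^2 s}$ by Theorem~\ref{thm:main}(c), so I only need the total number of classes of discriminant $(2s)^2$. Using the enumeration of primitive classes of square discriminant from the split-form analysis (Section~\ref{sec:split})---namely that there are $\varphi(M)$ primitive classes of discriminant $M^2$---and summing over contents $k\mid 2s$ gives $\sum_{k\mid 2s}\varphi(2s/k)=\sum_{d\mid 2s}\varphi(d)=2s$ classes in total; multiplying, $2s\cdot\frac{6T\log T}{\pi^2 s}=\frac{12T\log T}{\pi^2}$. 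I expect the main obstacle to be precisely this last case: unlike (a) and (b) it is not governed by~\eqref{eq:ring}, and its clean answer hinges on correctly enumerating the proper equivalence classes of forms of square discriminant. The rest of the argument is the (routine, if delicate) bookkeeping of the unit-index cancellations and the multiplicative divisor sum.
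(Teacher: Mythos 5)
Your argument is correct and follows essentially the same route as the paper: sum Theorem~\ref{thm:main} over the classes organized by content, cancel the unit indices via the ring class number formula~\eqref{eq:ring} together with the $\omega$/regulator relations, and evaluate the resulting divisor sum $\sum_{g\mid f}g\prod_{p\mid g}(1-\chi_{d_0}(p)/p)$ multiplicatively (the paper isolates this last step as Lemma~\ref{multiplikativnost}), with the signature factor $2$ appearing only in the definite case. The only cosmetic difference is in case~(c), where you count $\varphi(2s/k)$ primitive classes per content and sum to $2s$, whereas the paper directly cites that there are $2s$ classes of discriminant $4s^2$ in total.
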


In the proof we will use the following simple arithmetic lemma.
Let $\sigma_1(m)$ denote the sum of positive divisors of $m$,
and write $\chi_{d_0}(p)=\bigl( \frac{d_0}{p} \bigr)$ for the Kronecker symbol attached to the fundamental discriminant $d_0$.

\begin{lemma}
    \label{multiplikativnost}
    Let 
    \[
        g(n)=\sum_{l|n} l \prod_{p|l}\Bigl(1-\Bigl(\frac{d_0}{p}\Bigr)\frac{1}{p}\Bigr).
    \]
    Then $g$ is multiplicative and
    \[
        g(p^e) 
        = \sigma_1(p^e)-\sigma_1(p^{e-1}) \Bigl(\frac{d_0}{p}\Bigr).
    \]
\end{lemma}

\begin{proof}
    The function $g_1(l)=\prod_{p|l}(1-(\frac{d_0}{p})\frac{1}{p})$ is multiplicative, 
    and therefore so is $g_2(l)=l\cdot g_1(l)$, being a product of multiplicative functions. 
    Since $g=g_2\ast 1$ is a Dirichlet convolution of multiplicative functions, it is multiplicative as well. For $n=p^e$ we compute
    \[
        g(p^e)
        = 1+\sum_{k=1}^e p^k \Bigl(1-\Bigl(\frac{d_0}{p}\Bigr)\frac{1}{p}\Bigr)
        =\sum_{k=0}^e p^k-\sum_{k=0}^{e-1} p^k\Bigl(\frac{d_0}{p}\Bigr)
        = \sigma_1(p^e)-\sigma_1(p^{e-1})\Bigl(\frac{d_0}{p}\Bigr),
    \]
    as claimed.
\end{proof}

\begin{proof}
    [Proof of Theorem \ref{thm:total-count}]
    We begin with the definite case $n<0$.
    There are two possible signatures (positive and negative definite). 
    Equivalently, for every $D(n)$-pair $\{a,c\}$ the associated form $E_{ac}$ is properly equivalent to either $Q$ or $-Q$, for a positive definite form $Q$ representing the class in $\Cl^+(4n)$ determined by $\{a,c\}$ (throughout the proof we write $Q$ both for a form and for its proper equivalence class).

    By Theorem \ref{thm:main} (a),
    \[
        D^n_T
        = \sum_{Q \in \cC(4n)} D_T^Q
        = 2 \sum_{l |f} \frac{6T}{\omega(d_0 l^2) \pi |n|^{1/2}}h(d_0 l^2)+o(T).
    \]
    Using the ring class number formula~\eqref{eq:ring}, together with the identity $\omega(d_0l^2)[O_{d_0}^\times : O_{d_0 l^2}^\times]=\omega(d_0)$, this becomes
    \[
        D^n_T
        = \frac{12T}{\omega(d_0) \pi |n|^{1/2}}h(d_0)\cdot \sum_{l |f} l \prod_{p|l} \Bigl(1 - \Bigl(\frac{d_0}{p}\Bigr) \frac{1}{p}\Bigr)+o(T).
    \]
    Claim (a) now follows from Lemma \ref{multiplikativnost}.

    Next assume that $n>0$ is not a square. 
    The computation is essentially the same. 
    By Theorem \ref{thm:main} (b),
    \[
        D^n_T
        = \sum_{Q \in \cC(4n)} D_T^Q
        =\sum_{l |f} \frac{12T \log{\epsilon_{d_0 l^2}}}{\kappa(d_0 l^2)\pi^2n^{1/2}} h^+(d_0 l^2) + o(T).
    \]
    Using $\epsilon_{d_0l^2}=\epsilon_{d_0}^{[\cO_{d_0}^\times:\cO_{d_0l^2}^\times]}$ and $h^+=\kappa h$, we obtain
    \[
        D^n_T
        = \frac{12T}{\pi^2 n^{1/2}} \sum_{l|f}[\cO_{d_0}^\times:\cO_{d_0 l^2}^\times] \log{\epsilon_{d_0}} \ h(d_0 l^2)+o(T).
    \]
    Applying the ring class number formula \eqref{eq:ring} gives
    \[
        D^n_T
        = \frac{12T \log{\epsilon_{d_0}}h(d_0)}{\pi^2 n^{1/2}}\sum_{l|f}l\prod_{p|l}\Bigl(1-\chi_{d_0}(p)\frac{1}{p}\Bigr)+o(T),
    \]
    and claim (b) again follows from Lemma \ref{multiplikativnost}.

    Finally, assume that $n=k^2$ is a square. 
    Reduction theory for binary quadratic forms of square discriminant is much simpler than in the nonsquare case.
    Concretely, there are exactly $2k$ proper equivalence classes of integral binary quadratic forms of discriminant $4k^2$ (see, for example, \cite[Exercise 1]{split-zagier}).\footnote{Representatives for these classes are $[0,2k,c]$ for $c=0,1,\dots,2k-1$.}
    The claim follows by summing the asymptotic from Theorem~\ref{thm:main} (c) over these classes.
\end{proof}

\section{$\SL_2(\R)$-action on binary quadratic forms and homogeneous dynamics}
\label{sec:homdyn}

We now turn to the proof of Theorem~\ref{thm:mainF}. 
The argument follows a standard template and uses familiar tools from homogeneous dynamics. 
Since we expect the paper to be of particular interest to researchers coming from the Diophantine-tuples side, 
we include a bit more setup than would typically be required.

To set the stage, 
let $V$ be the $3$-dimensional space of \emph{real} binary quadratic forms. 
As usual, we identify $V$ with the space of symmetric $2\times 2$ real matrices via
\[
    \bigl( Q(x,y) = ax^2 + bxy + cy^2 \bigr)
    \mapsto
    \begin{pmatrix}
        a & b/2 \\
        b/2 & c
    \end{pmatrix}.
\]
Depending on the context, 
we use $[a,b,c]$ and $Q$ to denote either the quadratic form or its associated symmetric matrix. 
In particular, for $v\in\R^2$ we $Q(v) = v^T Q v$.

Let $G=\SL_2(\R)$ and $\Gamma=\SL_2(\Z)$. 
We have already discussed the $\Gamma$-action on integral quadratic forms; 
from now on we consider its extension to a $G$-action on $V$.
In the matrix model, $G$ acts by congruence:
\[
    g\cdot Q := gQg^{T}.
\]

We will study the $G$-orbits of this action. 
By Sylvester’s law of inertia (see, for example, \cite[Section~4.5]{MR2978290}), 
the signature and the discriminant form a complete set of invariants: 
two nondegenerate quadratic forms lie in the same $G$-orbit if and only if they have the same signature and discriminant.

Fix a nondegenerate binary form $Q$, and let $H$ be its stabilizer in $G$.
Then the orbit $G\cdot Q$ is isomorphic (as a homogeneous space\footnote{A $G$-set is called \emph{homogeneous} if the $G$-action is transitive.}) to $G/H$ (with left multiplication by $G$),
and there is a unique (up to scaling) $G$-invariant measure $m_{G/H}$ on $G/H$. 
Via the orbit map, $m_{G/H}$ induces a $G$-invariant measure on $G\cdot Q$.

Given a left-invariant Haar measure $m_H$ on $H$, there exists a left-invariant Haar measure $m_G$ on $G$ such that locally $m_G = m_{G/H} \otimes m_H$,
in the sense that for every $f\in C_c(G)$,
\begin{equation} 
    \label{eq:Fubini}
    \int_G f(g) \,dm_G(g)
    = \int_{G/H} \int_H f(gh) \, dm_H(h) \, dm_{G/H}(gH).
\end{equation}

From now on, fix a nonzero integer $n$ and let $Q$ be an integral binary quadratic form of discriminant $4n$. 
Let
\[
    H = \{g \in G : g Q g^T = Q\}
\]
be the stabilizer of $Q$ for the $G$-action above. 
Note that $H$ is also the fixed-point set of the involution $g \mapsto Q g^{-T} Q^{-1}$, 
a hypothesis needed later when we apply the Eskin--McMullen counting theorem.

Define
\begin{equation}
    \label{eq:cFT}
    \cF_T = \bigl\{ [a,b,c] \in V \,\bigm|\, |a|,|c| \leq T,\ a>c,\ b \geq 0 \bigr\}.
\end{equation}
To prove Theorem~\ref{thm:mainF} we need an asymptotic, as $T\to\infty$, for the number of forms in the $\Gamma$-orbit of $Q$ that lie in $\cF_T$, namely
\begin{equation}
    \label{eq:countingGQ}
    \bigl|(\Gamma \cdot  Q) \cap \cF_T\bigr|.
\end{equation}

To place this into the framework of \cite{MR1230290}, we transfer the problem $G \cdot Q$ to $G/H$. 
Set
\[
    \cB_T 
    := \{ gH \in G/H \mid g \cdot Q \in \cF_T\}.
\]
Then the counting problem becomes
\[
    \bigl|\Gamma\cdot H \cap \cB_T\bigr|,
\]
where we view $H$ as the base point in $G/H$.

A natural heuristic is that
\begin{equation}
    \label{eq:approx-orbit}
    \bigl|\Gamma\cdot H \cap \cB_T\bigr|
    \approx \frac{m_{G/H}(\cB_T)}{m_{G/H}(\Gamma \backslash G/H)},    
\end{equation}
where $m_{G/H}(\Gamma\backslash G/H)$ denotes the volume of a fundamental domain for the $\Gamma$-action on $G/H$.
Indeed, $G/H$ is partitioned into $\Gamma$-translates of such a fundamental domain,
all of equal volume by $G$-invariance of $m_{G/H}$.
Each translate contains exactly one point of the orbit $\Gamma\cdot H$, 
so one expects $|\Gamma\cdot H\cap \cB_T|$ to be comparable to the number of translates that fit inside $\cB_T$, which is exactly the right-hand side of \eqref{eq:approx-orbit}.

The main subtlety is boundary behaviour.
The boundary of $\cB_T$ cuts through many translates of the fundamental domain, 
and in principle it could be biased toward avoiding (or capturing) orbit points. 
This matters if a neighbourhood of the boundary occupies a non-negligible proportion of $\cB_T$. 
To rule this out one assumes a regularity condition on the boundary,
usually formulated as a well-roundedness hypothesis. 
In our setting this condition will always hold, since $\cB_T$ is a sector of a norm ball (see, for example, \cite{sectors,MR3025156}).
The famous counting theorem of Eskin and McMullen \cite{MR1230290} shows that under such a hypothesis, 
mixing of the $G$-action makes the boundary contributions sufficiently ``random'',
and the heuristic \eqref{eq:approx-orbit} becomes an asymptotic formula.

The following statement is the specialization of \cite[Theorem 1.4]{MR1230290} to our setting.
\begin{theorem}
    \label{thm:EM-counting}
    As $T\to\infty$,
    \begin{equation}
        \label{eq:EM-counting}
        |\Gamma\cdot H \cap \cB_T| \sim \frac{m_H\bigl( (\Gamma \cap H) \backslash H\bigr)}{m_G(\Gamma \backslash G)} \, m_{G/H}(\cB_T).
    \end{equation}
\end{theorem}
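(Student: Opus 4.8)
The plan is to derive \eqref{eq:EM-counting} as a direct specialization of the Eskin--McMullen counting theorem \cite[Theorem 1.4]{MR1230290}; the substance of the argument lies in verifying that its hypotheses hold in our setting. The ambient group $G=\SL_2(\R)$ is connected and semisimple, with finite centre and no compact factors, and $\Gamma=\SL_2(\Z)$ is a lattice in $G$, so these structural requirements are immediate. The decomposition of Haar measures in \eqref{eq:Fubini} fixes compatible normalizations of $m_G$, $m_H$, and $m_{G/H}$, which is exactly the normalization under which the constant in \eqref{eq:EM-counting} is to be read.

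First I would pin down the isomorphism type of the stabilizer $H$. By Sylvester's law of inertia the orbit $G\cdot Q$ consists of all real forms of discriminant $4n$ and the same signature as $Q$, so $H$ is conjugate in $G$ to the stabilizer of a normalized form. When $n<0$ the form is definite and $H$ is conjugate to the compact torus $\mathrm{SO}(2)$; when $n>0$ it is indefinite and $H$ is conjugate to the identity component of $\mathrm{SO}(1,1)$, a one-parameter split torus isomorphic to $\R$. In either case $H$ is the fixed-point set of the involution $g\mapsto Qg^{-T}Q^{-1}$ already recorded above, so $(G,H)$ is a symmetric pair, which is the key hypothesis for \cite[Theorem 1.4]{MR1230290}.

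Next I would check that $\Gamma\cap H$ is a lattice in $H$, so that $m_H\bigl((\Gamma\cap H)\backslash H\bigr)$ is finite and the right-hand side of \eqref{eq:EM-counting} makes sense. For $n<0$ this is automatic: $H$ is compact, $\Gamma\cap H$ is finite, and the covolume is simply $m_H(H)$ divided by $|\Gamma\cap H|$. For $n>0$ the integral points of $H$ are governed by the units of the quadratic order $\mathcal{O}_{d'}$; by Dirichlet's unit theorem these form a rank-one group modulo torsion, hence a cocompact lattice in the one-parameter group $H\cong\R$, with covolume proportional to the regulator $\log\epsilon_{d'}$. This is precisely the mechanism that will produce the $\log\epsilon_{d'}$ factor in Theorem~\ref{thm:main}(b).

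It then remains to supply the analytic input. The family $\cB_T$ is the preimage under the orbit map of the region $\cF_T$ of \eqref{eq:cFT}, which is a sector of a norm ball in the space of quadratic forms; such families are well-rounded \cite{sectors,MR3025156}, so the boundary of $\cB_T$ meets a negligible proportion of the $\Gamma$-translates of a fundamental domain. Mixing of the $G$-action on $\Gamma\backslash G$, via the Howe--Moore theorem, then guarantees that the boundary contributions equidistribute, and \cite[Theorem 1.4]{MR1230290} upgrades the heuristic \eqref{eq:approx-orbit} to the asymptotic \eqref{eq:EM-counting}. The main obstacle is the indefinite case $n>0$: there $H$ is noncompact, and verifying cocompactness of $\Gamma\cap H$ together with the well-roundedness of the sector $\cF_T$ are the two points that require genuine care, the latter being exactly what rules out escape of mass near the noncompact directions; by contrast, the compact case $n<0$ reduces to a classical lattice-point count.
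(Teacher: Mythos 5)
Your overall route is exactly the paper's: Theorem~\ref{thm:EM-counting} is obtained by specializing \cite[Theorem 1.4]{MR1230290}, after observing that $H$ is the fixed-point set of the involution $g\mapsto Qg^{-T}Q^{-1}$ (so $(G,H)$ is a symmetric pair), that the measures are normalized compatibly via \eqref{eq:Fubini}, and that $\cB_T$ is well-rounded because $\cF_T$ is a sector of a norm ball. Those verifications are all fine as far as they go.

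There is, however, one genuine gap: your verification that $\Gamma\cap H$ is a lattice in $H$ for \emph{all} $n>0$ is false. Dirichlet's unit theorem produces a rank-one unit group only when $\mathcal{O}_{d'}$ is an order in a real quadratic field, i.e.\ when $4n$ is not a perfect square. If $n$ is a perfect square the form $Q$ is split, one has $\Gamma\cap H=\{\pm I\}$, which is \emph{not} a lattice in the noncompact group $H$, the right-hand side of \eqref{eq:EM-counting} is meaningless (the covolume is infinite), and indeed the true count grows like $T\log T$ rather than $T$. So the Eskin--McMullen theorem does not apply in that regime, and your argument as written would silently assert a false statement there; the split case has to be excluded from Theorem~\ref{thm:EM-counting} and treated by a different method (in the paper, via the Oh--Shah theorem \cite[Theorem 6.1]{MR3219562} in Section~\ref{sec:split}). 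A smaller slip: for indefinite $Q$ the stabilizer is not connected --- it is the full group $\pm H^0$ of \eqref{eq:SO(1,1)-hyperbolic}, not just the identity component --- which matters when you normalize the covolume, since $-I\in\Gamma\cap H$ accounts for the second component.
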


Thus, in each of the three regimes appearing in Theorems \ref{thm:main} and \ref{thm:mainF} (definite, indefinite nonsplit, and split),
the remaining work is to compute the three terms on the right-hand side of \eqref{eq:EM-counting}.

\section{Definite case}
\label{sec:definite}

Throughout this section, $Q$ denotes an integral definite binary quadratic form of discriminant $4n<0$, and $H$ denotes its stabilizer in $G=\SL_2(\R)$.
Since $Q$ is definite, its stabilizer is conjugate to $\SO(2)$.
To simplify notation---and to avoid repeatedly writing conjugations---we will work with the standard rotation matrices model
\begin{equation}
    \label{eq:SO(2)-rotation-matrices}
    H=
    \Bigl\{
        \begin{pmatrix}
            \cos \theta & -\sin \theta \\
            \sin \theta & \cos \theta
        \end{pmatrix}
        \Bigm|
        \theta \in [0,2\pi)
    \Bigr\},
\end{equation}
keeping in mind that, for a general definite form $Q$, the actual stabilizer is a conjugate of this group.

\subsection*{Measures}

We first fix the three invariant measures that appear in \eqref{eq:EM-counting}.
View $G$ as the real algebraic group
\begin{equation}
    \label{eq:Galgebraic}
    G = \{ (a,b,c,d)\in\R^4 \mid ad - bc = 1\}.
\end{equation}
Let $\omega_G$ be the gauge form on $G$ which, on the chart $\{a\neq 0\}$, is given by
\begin{equation}
    \label{eq:omegaG}
    \omega_G = \frac{1}{a}\,da\wedge db\wedge dc.
\end{equation}
This form is left-invariant (see, e.g., \cite[Section~3.5]{PlatonovRapinchuk1994}),
and hence determines a Haar measure $m_G$ on $G$.
With this normalization one has (see, e.g., \cite[Sections~3.5 and~4.5]{PlatonovRapinchuk1994})
\begin{equation}
    \label{eq:GammaG}
    m_G(\Gamma \backslash G) = \frac{\pi^2}{6}.
\end{equation}
It remains to choose Haar measures $m_H$ on $H$ and $m_{G/H}$ on $G/H$ so that the disintegration identity \eqref{eq:Fubini} holds.

To ease the calculations (especially for $m_{G/H}$),
we use Iwasawa coordinates.
Every $g\in G$ can be written uniquely as
\[
    g=n(x)\,a(y)\,k(\theta),
    \qquad x\in\R,\ y>0,\ \theta\in[0,2\pi),
\]
where
\[
    n(x) =
    \begin{pmatrix}
        1 & x \\
        0 & 1
    \end{pmatrix},
    \quad
    a(y) = 
    \begin{pmatrix}
        y^{1/2} & 0 \\
        0 & y^{-1/2}
    \end{pmatrix},
    \quad
    k(\theta) =
    \begin{pmatrix}
        \cos \theta & -\sin \theta \\
        \sin \theta & \cos \theta
    \end{pmatrix}.
\]
A straightforward Jacobian computation gives
\[
    \omega_G = - \frac{1}{2y^2}\,dx\wedge dy\wedge d\theta.
\]
Accordingly, we take
\[
    m_{G/H}\bigl(n(dx)\,a(dy)\,H\bigr) = \frac{dx\,dy}{y^2},
    \qquad
    m_H\bigl(k(d\theta)\bigr) = \frac{d\theta}{2}.
\]
With this normalization,
\begin{equation}
    \label{eq:mH-norm-def}
    m_H(H) = 2\pi / 2 = \pi.
\end{equation}

\subsection*{The volume of $\cB_T$}

\begin{lemma}
    \label{l:size-B_T_definite}
    As $T\to\infty$,
    \[  
        m_{G/H}(\cB_T) = \frac{T}{|n|^{1/2}} + o(T).
    \]
\end{lemma}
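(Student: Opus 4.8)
The plan is to compute the area directly in the Iwasawa coordinates already fixed above, in which $m_{G/H}\bigl(n(dx)\,a(dy)\,H\bigr)=dx\,dy/y^2$. Since $Q$ is definite of discriminant $4n<0$, its $G$-orbit consists of all real forms of discriminant $4n$ of one fixed (say positive) definiteness, and $m_{G/H}(\cB_T)$ equals the $m_{G/H}$-induced invariant measure of $\cF_T$ intersected with this orbit; in particular it is independent of the chosen base point. I may therefore replace $Q$ by the convenient (not necessarily integral) representative $Q_0=[\,|n|^{1/2},0,|n|^{1/2}\,]$, whose stabilizer is exactly the $\SO(2)$ of \eqref{eq:SO(2)-rotation-matrices}. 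A direct computation of $g\cdot Q_0 = n(x)\,a(y)\,Q_0\,a(y)^{T}n(x)^{T}$ then yields the explicit parametrization of the orbit
\[
    g\cdot Q_0 = \Bigl[\ \frac{|n|^{1/2}(x^2+y^2)}{y},\ \ \frac{2|n|^{1/2}x}{y},\ \ \frac{|n|^{1/2}}{y}\ \Bigr].
\]

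Next I would translate the four defining inequalities of $\cF_T$ (see \eqref{eq:cFT}) into conditions on $(x,y)$. Writing $g\cdot Q_0=[A,B,C]$ with $A,C>0$, one finds $B\ge 0\iff x\ge 0$, $\ A>C\iff x^2+y^2>1$, $\ |A|\le T\iff x^2+y^2\le Ty/|n|^{1/2}$, and $|C|\le T\iff y\ge |n|^{1/2}/T$. The key simplification is that the last condition is redundant: if $x^2+y^2>1$ and $x^2+y^2\le Ty/|n|^{1/2}$, then $1<Ty/|n|^{1/2}$, i.e.\ $y>|n|^{1/2}/T$. Thus $\cB_T$ corresponds to the planar region
\[
    R_T=\Bigl\{(x,y)\ :\ x\ge 0,\ \ 1<x^2+y^2\le \tfrac{Ty}{|n|^{1/2}}\Bigr\},
\]
namely the part of the right half-plane lying outside the geodesic $x^2+y^2=1$ and inside the horocycle $x^2+y^2=Ty/|n|^{1/2}$ based at $0$. (The negative-definite case is symmetric: replacing $Q_0$ by $-Q_0$ flips the inequalities and produces the congruent region $\{x\le 0,\ x^2+y^2<1,\ y\ge |n|^{1/2}/T\}$ of equal area.)

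It then remains to evaluate $m_{G/H}(\cB_T)=\iint_{R_T}dx\,dy/y^2$. Setting $M=T/|n|^{1/2}$ and slicing in $y$, the inner $x$-integral runs up to $\sqrt{y(M-y)}$, starting from $\sqrt{1-y^2}$ on $1/M\le y<1$ and from $0$ on $1\le y\le M$; hence the integral splits as
\[
    \int_{1/M}^{M}\frac{\sqrt{M-y}}{y^{3/2}}\,dy\ -\ \int_{1/M}^{1}\frac{\sqrt{1-y^2}}{y^2}\,dy.
\]
Both antiderivatives are elementary (via $y=M\sin^2\theta$ and $y=\sin\phi$), equal to $-2\sqrt{(M-y)/y}-2\arcsin\sqrt{y/M}$ and $-\sqrt{1-y^2}/y-\arcsin y$. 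Evaluating and expanding as $M\to\infty$, the first integral contributes $2M+O(1)$ and the second $M+O(1)$, so their difference is $M+O(1)=T/|n|^{1/2}+o(T)$, which is the claim (in fact the $O(1)$ term is $-\pi/2$). I expect the only real work to be this bookkeeping --- correctly cutting out the region $R_T$, verifying the redundancy of the condition $|C|\le T$, and tracking the two $\sqrt{\,\cdot\,}/y^{3/2}$-type contributions so that they combine to the stated leading term; there is no conceptual obstacle once the half-plane model is in place.
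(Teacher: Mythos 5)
Your proposal is correct and follows essentially the same route as the paper: reduce to the base point $[|n|^{1/2},0,|n|^{1/2}]$, translate the conditions of $\cF_T$ into the region $\{x\ge 0,\ 1<x^2+y^2\le Ty/|n|^{1/2}\}$ in Iwasawa coordinates, and integrate $dx\,dy/y^2$. The only (harmless) difference is that you evaluate the resulting integral via explicit antiderivatives, whereas the paper rescales $y=u/M$ and estimates the integrand; your version even gives the sharper error term $O(1)$ in place of $o(T)$.
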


\begin{proof}
    We may assume $Q$ is positive definite (otherwise replace it by $-Q$).
    Since only the $G$-orbit of $Q$ matters for this lemma, and since $Q$ is $G$-equivalent to $[|n|^{1/2},0,|n|^{1/2}]$, we may take
    \[
        Q=[|n|^{1/2},0,|n|^{1/2}].
    \]
    For this choice, the stabilizer is exactly the rotation matrices group \eqref{eq:SO(2)-rotation-matrices} (so no conjugation is needed).
    
    By definition,
    \[
        \cB_T = \{ n(x)a(y)H \in G/H \mid n(x)a(y)\cdot [|n|^{1/2},0,|n|^{1/2}] \in \cF_T\}.
    \]
    In matrix notation,
    \[
        n(x)a(y)\cdot [|n|^{1/2},0,|n|^{1/2}]
        = |n|^{1/2} y^{-1}
        \begin{pmatrix}
            x^2+y^2 & x \\
            x & 1
        \end{pmatrix},
    \]
    Set $M=T/|n|^{1/2}$. 
    Translating the conditions defining $\cF_T$ into inequalities in $(x,y)$ yields
    \[
        \cB_T
        = \bigl\{ n(x)a(y)H \in G/H \bigm| 
        \sqrt{\max(1-y^2,0)} \leq x \leq \sqrt{y(M - y)},
        1 / M \leq y \leq M
        \bigr\}.
    \]
    Therefore
    \begin{align*}
        m_{G/H}(\cB_T)
        &= \iint_{n(x)a(y)H \in \cB_T} \frac{dx\,dy}{y^2}
        = \int_{y=1/M}^M \frac{\sqrt{y(M - y)} - \sqrt{\max(1-y^2,0)}}{y^2} \, dy \\
        &= M \int_{u=1}^M \frac{\sqrt{u - u^2/M^2} - \sqrt{1-u^2/M^2}}{u^2} \, du
        + M \int_{u=M}^{M^2} \frac{\sqrt{u - u^2/M^2}}{u^2} \, du.
    \end{align*}
    For $u\in[1,M]$ the integrand admits the estimate
    \[
        \frac{\sqrt{u}-1}{u^2} + O(M^{-2}),
    \]
    so the first integral equals $1+O(M^{-1/2})$.
    For $u\in[M,M^2]$ the integrand is $O(u^{-3/2})$, hence the second integral is $O(M^{-1/2})$. 
    The claim follows easily.
\end{proof}

\subsection*{The covolume of $\Gamma\cap H$ in $H$}

\begin{lemma}
    \label{l:mHnorm}
    Write $Q=kQ'$, where $Q'$ is primitive, and let $d'$ be the discriminant of $Q'$ (so $d'=4n/k^2$).
    Then
    \[  
        m_{H}(\Gamma \cap H \backslash H) = \frac{\pi}{\omega(d')},
    \]
    where $\omega$ is as in Theorem \ref{thm:main}.
\end{lemma}

\begin{proof}
    Since $Q$ and $Q'$ have the same stabilizer, the normalization \eqref{eq:mH-norm-def} applies equally to both. The claim then follows from the classical fact (see \cite[Section~2.5.3]{MR2300780}) that $|\Gamma\cap H|=\omega(d')$.
\end{proof}

Combining Theorem \ref{thm:EM-counting}, Lemmas \ref{l:size-B_T_definite} and \ref{l:mHnorm}, and \eqref{eq:GammaG}, we obtain the desired asymptotic in the definite case.

\section{Indefinite nonsplit case}

Let $Q$ be an indefinite integral binary quadratic form of nonsquare discriminant $4n>0$, and let $H$ again denote its stabilizer in $G$.
We follow a similar strategy as in the definite case.

In this setting, $H$ is conjugate to $\SO(1,1)$.
As before, to keep notation light we fix a concrete model and work with it throughout, 
keeping in mind that one may have to conjugate to obtain the actual stabilizer of $Q$. 
Concretely, we take $H$ to be the subgroup of hyperbolic rotation matrices\footnote{We use the $s/2$ parametrization (instead of the more common $s$) to match the conventions in \cite{MR3219562}, which we will use in the next section.}
\begin{equation}
    \label{eq:SO(1,1)-hyperbolic}
    H =
    \Bigl\{
        \pm \begin{pmatrix}
            \cosh (s/2) & \sinh (s/2) \\
            \sinh (s/2) & \cosh (s/2)
        \end{pmatrix}
        \Bigm|
        s \in \R
    \Bigr\}.
\end{equation}

Every $g\in \SL_2(\R)$ admits a unique generalized Cartan decomposition (see \cite[Section 5]{MR3219562})
\begin{equation}
    \label{eq:cartan-decomp}
    g = \pm k(\theta) \, a(t) \, h(s)
\end{equation}
for a choice of sign $\pm$, with $\theta\in[0,2\pi)$ and $s,t\in\R$, where
\[
    k(\theta) = \begin{pmatrix} \cos (\theta/2) & -\sin(\theta/2)\\ \sin(\theta/2) & \cos(\theta/2)\end{pmatrix},
    \quad
    a(t) = \begin{pmatrix} e^{t/2} & 0\\ 0 & e^{-t/2}\end{pmatrix},
    \quad
    h(s) = \begin{pmatrix} \cosh (s/2) & \sinh (s/2)\\ \sinh (s/2) & \cosh (s/2)\end{pmatrix}.
\]
A straightforward Jacobian computation shows that the gauge form $\omega_G$ from \eqref{eq:omegaG}, when written in these coordinates, becomes
\[
    \omega_G = \frac{\cosh(t)}4 \,d\theta \wedge dt \wedge ds.
\]
Accordingly, we define invariant measures on $G/H$ and $H$ by
\begin{equation}
    \label{eq:cartan-measures}
    m_{G/H}\bigl( k(\theta)a(t)H\bigr) = \frac{\cosh(t)\,d\theta\,dt}4,
    \qquad
    m_H\bigl(\pm h(ds)\bigr) = ds.
\end{equation}

\subsection*{The volume of $\cB_T$}

\begin{lemma}
    \label{l:kugla-indefinite}
    As $T \to \infty$,
    \[  
        m_{G/H}(\cB_T) = \frac{T}{2n^{1/2}} + o(T).
    \]
\end{lemma}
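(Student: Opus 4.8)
The plan is to mirror the computation from the definite case (Lemma~\ref{l:size-B_T_definite}): reduce to a convenient representative of the $G$-orbit of $Q$, parametrize $\cB_T$ explicitly in the Cartan coordinates $(\theta,t)$ coming from \eqref{eq:cartan-decomp}, translate the defining inequalities of $\cF_T$ into conditions on these coordinates, and then integrate the density $\tfrac{1}{4}\cosh(t)\,d\theta\,dt$ from \eqref{eq:cartan-measures}. Since only the $G$-orbit of $Q$ matters for $m_{G/H}(\cB_T)$, and since every indefinite form of discriminant $4n$ is $G$-equivalent to $[n^{1/2},0,-n^{1/2}]$ (which has stabilizer exactly the hyperbolic rotations \eqref{eq:SO(1,1)-hyperbolic}, so no conjugation is needed), I would take $Q=[n^{1/2},0,-n^{1/2}]$.

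Concretely, first I would compute the matrix $k(\theta)a(t)\cdot Q = k(\theta)a(t)\,Q\,a(t)^{T}k(\theta)^{T}$ as an explicit symmetric matrix in $(\theta,t)$, reading off its entries $A(\theta,t)$, $B(\theta,t)$, $C(\theta,t)$ (these will be linear combinations of $e^{\pm t}$ times trigonometric functions of $\theta$). Note that the $H$-coordinate $s$ drops out, as it must, since $H$ stabilizes $Q$. The set $\cF_T$ imposes $|A|,|C|\le T$, $A>C$, and $B\ge 0$; I would convert these into a region $R_T\subset[0,2\pi)\times\R$ in the $(\theta,t)$-plane. As in the definite case, I expect the governing scale to be $M=T/n^{1/2}$, so that after substituting and rescaling the volume becomes
\[
    m_{G/H}(\cB_T)
    = \frac14\iint_{R_T}\cosh(t)\,d\theta\,dt,
\]
and the leading behaviour is extracted by letting $t$ range up to roughly $\log M$ (the hyperbolic direction), where $\cosh(t)\sim \tfrac12 e^{t}\sim \tfrac12 \cdot(\text{linear scale})$ produces the single factor of $T$.

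The main obstacle, and the place where this case genuinely differs from the definite one, is the unboundedness of $H$ and hence of the range of $t$: whereas in the definite case the compact stabilizer $\SO(2)$ made the integration region bounded, here the hyperbolic rotations are noncompact, so $R_T$ extends to large $|t|$ and the integrand $\cosh(t)$ grows. I expect the care to lie in identifying precisely where the constraints $|A|\le T$, $|C|\le T$ cut off the $t$-range, and in checking that the angular ($\theta$) integration together with the sign/ordering conditions $A>C$, $B\ge 0$ contributes the correct constant $\tfrac12$ relative to the naive $\tfrac{1}{n^{1/2}}$ (so that the final answer is $\tfrac{T}{2n^{1/2}}$ rather than $\tfrac{T}{n^{1/2}}$). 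After the leading term is isolated, the remaining contributions—near the boundary of the $t$-range and from lower-order terms in the expansion of the integrand—should be $o(T)$ by elementary estimates analogous to the $O(M^{-1/2})$ bounds in Lemma~\ref{l:size-B_T_definite}, and the claim then follows.
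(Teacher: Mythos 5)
Your plan coincides with the paper's proof of this lemma almost step for step: the same normalization $Q=[n^{1/2},0,-n^{1/2}]$ (so the stabilizer is exactly \eqref{eq:SO(1,1)-hyperbolic}), the same generalized Cartan coordinates with density $\tfrac14\cosh(t)\,d\theta\,dt$, and the same translation of the constraints of $\cF_T$ --- the conditions $a>c$ and $b\ge 0$ restrict to $\theta\in[0,\pi/2)$, and with $M=T/n^{1/2}$ the two bounds $|\sinh t\pm\cosh t\cos\theta|\le M$ merge into the single inequality $|\sinh t|+\cosh t\cos\theta\le M$. You also correctly identify the new feature relative to the definite case, namely the noncompactness of $H$ and the growth of $\cosh(t)$. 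So the approach is the right one and would go through.

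The gap is that you stop exactly where the content of the lemma lies: the constant $\tfrac12$ is \emph{asserted} (``I expect \dots\ the correct constant'') rather than derived, and the heuristic of ``letting $t$ range up to roughly $\log M$'' is too coarse to produce it, because the $t$-cutoff depends on $\theta$ (for large $t$ the constraint is roughly $e^{t}(1+\cos\theta)\le 2M$), so the volume is a genuinely two-dimensional integral and not an angular measure times a radial factor. The paper closes this by substituting $u=\sinh t$ (so $\cosh t\,dt=du$ and the weight disappears), using the $t\mapsto -t$ symmetry to write
\[
    m_{G/H}(\cB_T)
    = \frac12 \int_{\theta=0}^{\pi/2} \int_{\substack{u\geq 0 \\ u+\cos \theta \sqrt{1+u^2} \leq M}} du\,d\theta,
\]
then rescaling $u=Mv$ and evaluating the limiting integral
$\int_{0}^{1/2}\frac{\pi}{2}\,dv+\int_{1/2}^{1}\arcsin\frac{1-v}{v}\,dv=1$
by an integration by parts. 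An equivalent way to finish along your lines would be to integrate first in $t$ and reduce to $\int_{0}^{\pi/2}(1+\cos\theta)^{-1}\,d\theta=1$, with an error estimate near $t=0$; either way, without some such explicit evaluation the leading coefficient $\frac{T}{2n^{1/2}}$ is not established.
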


\begin{proof}
    The argument parallels Lemma~\ref{l:size-B_T_definite}, so we spell out only the needed steps.

    Since only the $G$-orbit matters here, we may assume
    \[
        Q = [n^{1/2},0,-n^{1/2}]
    \]
    in which case the stabilizer is exactly the group of hyperbolic rotation matrices \eqref{eq:SO(1,1)-hyperbolic}.
    Thus
    \[
        \cB_T = \{ k(\theta)a(t)H \in G/H \mid k(\theta)\,a(t)\cdot [n^{1/2},0,-n^{1/2}] \in \cF_T\}.
    \]
    A direct computation gives
    \[
        k(\theta)a(t)\cdot [n^{1/2},0,-n^{1/2}]
        = n^{1/2}
        \begin{pmatrix}
            \sinh t+\cosh t\,\cos\theta & \cosh t\,\sin\theta\\
            \cosh t\,\sin\theta & \sinh t-\cosh t\,\cos\theta
        \end{pmatrix}.
    \]
    Writing $M=T/n^{1/2}$, the conditions defining $\cF_T$ translate into
    \begin{align*}
        \cB_T
        &= \bigl\{ k(\theta)a(t)H \in G/H \bigm| 
        |\sinh t+\cosh t\,\cos\theta| \leq M,\ 
        |\sinh t-\cosh t\,\cos\theta| \leq M,\
        \theta \in  [0,\pi/2)
        \bigr\}.
    \end{align*}
    (The restriction $\theta\in[0,\pi/2)$ comes from $a>c$ and $b\geq 0$.)
    
    For $\theta\in[0,\pi/2)$ the two inequalities in $t$ are equivalent to
    \[
        |\sinh t| + \cosh t\,\cos\theta \leq M.
    \]
    Let $u=\sinh t$. Since $du=\cosh t\,dt$, we obtain, by symmetry in $u$,
    \[
        m_{G/H}(\cB_T)
        = \iint_{k(\theta)a(t)H \in \cB_T} \frac{\cosh t\,d\theta\,dt}{4}
        = \frac12 \int_{\theta=0}^{\pi/2} \int_{\substack{u\geq 0 \\ u+\cos \theta \sqrt{1+u^2} \leq M}} du\,d\theta.
    \]
    Only $u \in [0,M]$ contribute.
    We split this interval depending on whether the inequality $u+\cos\theta\sqrt{1+u^2}\leq M$ forces a nontrivial lower bound on $\theta$. 
    The transition point is $u_0=(M^2-1)/(2M)$, and using $\arcsin=\pi/2-\arccos$ we get
    \[
        m_{G/H}(\cB_T)
        = \frac12 \, \Bigl(
            \int_{u=0}^{u_0} \frac\pi2 \,du
            + \int_{u_0}^M \arcsin \frac{M-u}{\sqrt{1+u^2}} \,du
            \Bigr).
    \]
    With the substitution $u=Mv$, dominated convergence gives, as $M\to\infty$,
    \[
        \frac{m_{G/H}(\cB_T)}{M/2}
        \sim 
        \int_{v=0}^{1/2} \frac\pi2 \,dv
            + \int_{1/2}^1 \arcsin \frac{1-v}{v} \,dv.
    \]
    A routine integration by parts followed by a trigonometric substitution shows that the right-hand side equals $1$, which implies the claimed asymptotic.
\end{proof}

\subsection*{The covolume of $\Gamma\cap H$ in $H$}

It remains to compute the factor $m_H\bigl((\Gamma\cap H)\backslash H\bigr)$.
Recall that our Haar measure on $H$ is induced by the form
\begin{equation}
    \label{eq:omega-SO(1,1)-hyperbolic}
    \omega_H(h) = ds
    \qquad
    \text{at }
    h(s) = \pm \begin{pmatrix}
        \cosh (s/2) & \sinh (s/2) \\
        \sinh (s/2) & \cosh (s/2)
    \end{pmatrix},
    \quad
    s \in \R.
\end{equation}
We will also express $\omega_H$ in diagonal coordinates:
\begin{equation}
    \label{eq:omega-SO(1,1)-diagonal}
    \omega_H(\tilde h(t)) = \frac{2 dt}t
    \qquad
    \text{at }
    \tilde h(t) = P\begin{pmatrix}
        t & 0 \\
        0 & t^{-1}
    \end{pmatrix}
    P^{-1},
    \quad
    t \neq 0
\end{equation}
where $P = \frac1{\sqrt2}(\begin{smallmatrix} 1 & 1 \\ 1 & -1 \end{smallmatrix})$.
In these coordinates, $m_H(\tilde h(dt))=2\,dt/|t|$. 

To compute $m_H\bigl((\Gamma\cap H)\backslash H\bigr)$ we use the following classical description of stabilizers (see \cite[Section~6.12]{MR2300780}).

\begin{lemma}
    \label{lem:stabilizer_indefinite}
    Let $[a,b,c]$ be a primitive indefinite quadratic form of nonsquare discriminant $d>0$.
    Its $\SL_2(\Z)$-stabilizer is isomorphic to $\Z/2\Z \times \Z$, generated by $-I$ and the infinite-order matrix
    \[
        T_0
        =
        \begin{pmatrix}
            \frac{t - b s}{2} & -c s \\
            a s & \frac{t + b s}{2}
        \end{pmatrix},
    \]
    where $(t,s)$ is the minimal positive solution of the Pell-type equation
    \[
        t^2 - d s^2 = 4.
    \]
\end{lemma}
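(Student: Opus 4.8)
The plan is to identify $H\cap\Gamma$ with the centralizer of a single matrix and then reduce everything to a Pell equation. Write $Q=\left(\begin{smallmatrix} a & b/2\\ b/2 & c\end{smallmatrix}\right)$ and $\Omega=\left(\begin{smallmatrix} 0 & 1\\ -1 & 0\end{smallmatrix}\right)$, and recall the identity $g^{-1}=\Omega g^{T}\Omega^{-1}$, valid for every $g\in\SL_2$. Substituting this into the stabilizer relation converts it into the commutation condition that $g$ commute with the matrix $J:=\Omega Q$ (whether one lands on $\Omega Q$ or $Q\Omega$ is a matter of the transpose convention in the action and is immaterial here). A one-line computation shows that $J$ is traceless and satisfies $J^{2}=\tfrac{d}{4}I$; since $d>0$ is not a perfect square, $J$ has two distinct real eigenvalues $\pm\sqrt d/2$, so its centralizer in the algebra of $2\times 2$ real matrices is the two-dimensional commutative algebra $\R[J]=\{uI+vJ:u,v\in\R\}$. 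Thus every element of $H\cap\Gamma$ has the form $uI+vJ$.

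Next I would cut this real family down to the arithmetic stabilizer. Scaling the parameters as $u=t/2$, $v=-s$, so that $uI+vJ$ is exactly the displayed matrix $\left(\begin{smallmatrix} \frac{t-bs}{2} & -cs\\ as & \frac{t+bs}{2}\end{smallmatrix}\right)$, the condition $\det=1$ becomes precisely $t^{2}-ds^{2}=4$. Integrality is handled in both directions. If the matrix is integral, then $as$, $bs$, $cs$ are all integers (the off-diagonal entries give $as,cs$, and the difference of the diagonal entries gives $bs$); since $[a,b,c]$ is primitive, choosing $x,y,z\in\Z$ with $ax+by+cz=1$ yields $s=(as)x+(bs)y+(cs)z\in\Z$, and then $t\in\Z$. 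Conversely, given an integer solution of $t^{2}-ds^{2}=4$, the congruence $d\equiv b^{2}\pmod 4$ (immediate from $d=b^{2}-4ac$) forces $t\equiv bs\pmod 2$, so the half-integer entries $\tfrac{t\pm bs}{2}$ are genuine integers. This sets up a bijection between $H\cap\Gamma$ and the set of integer solutions of $t^{2}-ds^{2}=4$.

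It remains to read off the group structure. The assignment $(t,s)\mapsto\tfrac{t+s\sqrt d}{2}$ identifies the solutions of $t^{2}-ds^{2}=4$ with the norm-one units of the order $\cO_{d}=\Z[\tfrac{d+\sqrt d}{2}]$, and a short check shows that our parametrization $(t,s)\mapsto g$ intertwines this identification with matrix multiplication, hence is a group isomorphism onto $H\cap\Gamma$. By Dirichlet's unit theorem for real quadratic orders, $\cO_d^{\times}\cong\{\pm1\}\times\Z$, and the norm-one units form a subgroup of the same shape. Under the isomorphism the unit $-1$, i.e.\ $(t,s)=(-2,0)$, corresponds to $-I$, while the minimal positive solution of the Pell equation corresponds to $T_{0}$; the latter has infinite order because its eigenvalues $\tfrac{t\pm s\sqrt d}{2}$ are real and different from $\pm 1$. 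This gives $H\cap\Gamma=\langle -I\rangle\times\langle T_{0}\rangle\cong\Z/2\Z\times\Z$, as claimed.

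The step I expect to be the crux is the integrality/surjectivity argument of the second paragraph: showing that every integral automorph arises from an \emph{integer} Pell solution rather than merely a real one. This is exactly where primitivity of $[a,b,c]$ is indispensable---without it one controls only $as,bs,cs$ and not $s$ itself---and it is the only part of the argument that is genuinely arithmetic rather than formal linear algebra. By contrast, the reduction to a commutation relation is a single identity, and the passage to the unit group together with Dirichlet's theorem is routine bookkeeping.
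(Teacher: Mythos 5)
Your argument is correct and complete. Note, however, that the paper does not actually prove this lemma: it is quoted as a classical fact with a reference to \cite[Section~6.12]{MR2300780}, so there is no in-paper proof to compare against. Your self-contained derivation --- reducing the stabilizer condition to the centralizer of the traceless matrix $J$ with $J^2=\tfrac{d}{4}I$, cutting down to integral points via primitivity (correctly identified as the one genuinely arithmetic step: $as,bs,cs\in\Z$ plus $\gcd(a,b,c)=1$ gives $s\in\Z$, and $d\equiv b^2\pmod 4$ gives the converse integrality), and then transporting the group structure from the norm-one units of $\cO_d$ via Dirichlet's unit theorem --- is the standard route to this classical statement and all the steps check out. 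One small point worth being explicit about: under the paper's action $g\cdot Q=gQg^{T}$, the stabilizer condition is equivalent to $g$ commuting with $Q\Omega$, not $\Omega Q$, which yields the transpose of the displayed $T_0$; the matrix $T_0$ as printed stabilizes $Q$ under the opposite convention $g\cdot Q=g^{T}Qg$ (a quick check with $Q=[1,0,-2]$, $(t,s)=(6,2)$ confirms this). You flag this as a convention issue and you are right that it is immaterial --- both conventions give isomorphic stabilizers interchanged by transposition, and the paper only ever uses the eigenvalues of $T_0$, which are unaffected --- but strictly speaking the lemma as stated is phrased in the classical (right-action) convention rather than the one the paper fixes in Section~\ref{sec:homdyn}.
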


\begin{remark}
    Let $\cO_d = \Z\Bigl[\dfrac{d+\sqrt{d}}{2}\Bigr]$ be the quadratic order of discriminant $d$ in $\Q(\sqrt{d})$.
    It is classical that 
    \[
        \frac{t + s\sqrt{d}}{2},
    \]
    with $t,s$ as in Lemma~\ref{lem:stabilizer_indefinite}, is either a fundamental unit (if $\kappa(d)=2$) or the square of a fundamental unit (if $\kappa(d)=1$) of $\mathcal{O}_d$.
    Consequently, the eigenvalues of $T_0$ are
    \[
        \lambda = \varepsilon_d^{\,2/\kappa(d)}
        \qquad\text{and}\qquad
        \lambda^{-1}.
    \]
\end{remark}

\begin{lemma}
    \label{l:stab-indef}
    We have
    \[
        m_H \bigl((\Gamma \cap H)\backslash H\bigr)
        =
        2 \log \varepsilon_d^{\,2/\kappa(d)}.
    \] 
\end{lemma}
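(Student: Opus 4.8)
The plan is to compute the covolume in the diagonal coordinates of \eqref{eq:omega-SO(1,1)-diagonal}, in which the Haar measure takes the shape $2\,dt/|t|$. The advantage of these coordinates is that the parameter $t$ attached to an element of $H$ is one of its eigenvalues, so the measure $2\,dt/|t|$ depends only on eigenvalue data. Since eigenvalues are unchanged under conjugation, this description of $m_H$ applies equally to the stabilizer of any indefinite form; in particular I may work with the genuine stabilizer $H$ of the integral form $Q$ --- for which the arithmetic description of Lemma~\ref{lem:stabilizer_indefinite} is available --- rather than with the concrete model \eqref{eq:SO(1,1)-hyperbolic}.

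First I would translate the structure of $\Gamma\cap H$ into these coordinates. By Lemma~\ref{lem:stabilizer_indefinite} and the remark following it, $\Gamma\cap H=\langle -I,\,T_0\rangle\cong \Z/2\Z\times\Z$, where $T_0$ has eigenvalues $\lambda=\varepsilon_d^{\,2/\kappa(d)}$ and $\lambda^{-1}$. Identifying $H$ with $\R^\times$ via $\tilde h(t)\mapsto t$, the element $-I=\tilde h(-1)$ corresponds to $-1$ and $T_0=\tilde h(\lambda)$ corresponds to $\lambda$, so that $\Gamma\cap H$ becomes the subgroup $\{\pm\lambda^{k}:k\in\Z\}$ of $\R^\times$.

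Next I would exhibit an explicit fundamental domain for this subgroup acting on $\R^\times\cong H$. The powers $\lambda^{k}$ reduce $|t|$ to the interval $[1,\lambda)$, while $-I$ folds the negative component of $\R^\times$ onto the positive one; hence $\{\tilde h(t):1\le t<\lambda\}$ meets each orbit exactly once and is a fundamental domain. Integrating the measure over it gives
\[
    m_H\bigl((\Gamma\cap H)\backslash H\bigr)
    = \int_1^\lambda \frac{2\,dt}{t}
    = 2\log\lambda
    = 2\log\varepsilon_d^{\,2/\kappa(d)},
\]
as claimed.

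The only genuine subtlety --- and the step I would verify most carefully --- is the factor of $2$ coming from $-I$, equivalently the two sign components of $\R^\times$ (the $\pm$ in the hyperbolic-rotation model \eqref{eq:SO(1,1)-hyperbolic}). Without the generator $-I$, the subgroup $\langle T_0\rangle$ would have a fundamental domain spread across both components, of measure $4\log\lambda$; it is precisely the identification induced by $-I$ that halves this to $2\log\lambda$. As a consistency check on this bookkeeping, I would feed the resulting constant, together with Lemma~\ref{l:kugla-indefinite} and the covolume \eqref{eq:GammaG}, into the Eskin--McMullen formula \eqref{eq:EM-counting} and confirm that it reproduces the leading coefficient $\tfrac{12\log\varepsilon_{d'}}{\kappa(d')\pi^2}$ of Theorem~\ref{thm:main}(b).
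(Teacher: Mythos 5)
Your proposal is correct and follows essentially the same route as the paper: the paper likewise passes to the diagonal coordinates of \eqref{eq:omega-SO(1,1)-diagonal}, uses $-I$ to reduce to the identity component (your ``folding'' of the two sign components, which is exactly the factor-of-$2$ bookkeeping you flag), identifies $T_0$ with $\tilde h(\lambda)$ for $\lambda=\varepsilon_d^{\,2/\kappa(d)}$, and integrates $2\,dt/t$ over the fundamental domain $t\in[1,\lambda]$. Your eigenvalue-based justification for transferring the measure to the genuine stabilizer is a slightly more explicit version of the conjugation remark the paper makes in its footnote, but the argument is the same.
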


\begin{proof}
    Let $H^0$ be the identity component of $H$. Then $H=\pm H^0$, and since $-I\in \Gamma\cap H$,
    \[
        m_H \bigl((\Gamma \cap H)\backslash H\bigr)
        = m_H \bigl((\Gamma \cap H^0)\backslash H^0\bigr).
    \]
    By Lemma~\ref{lem:stabilizer_indefinite}, the group $\Gamma\cap H^0$ is generated by the hyperbolic element $T_0$.

    Using the diagonal coordinate from \eqref{eq:omega-SO(1,1)-diagonal}, 
    we have\footnote{Here we conjugate by $P$ to match our fixed model \eqref{eq:SO(1,1)-hyperbolic}.
    This avoids introducing an additional conjugation coming from the stabilizer of the general $Q$.}
    \[
        H^0 =
        \Bigl\{
            P
            \begin{pmatrix}
                t & 0 \\
                0 & t^{-1}
            \end{pmatrix}
            P^{-1}
            \Bigm|
            t>0
        \Bigr\},
    \]
    and we may identify $T_0$ with
    \[
        P \begin{pmatrix}
            \lambda & 0 \\
            0 & \lambda^{-1}
        \end{pmatrix} P^{-1},
        \qquad
        \lambda=\varepsilon_d^{\,2/\kappa(d)}>1.
    \]

    A fundamental domain for $(\Gamma\cap H^0)\backslash H^0$ is therefore given by $t\in[1,\lambda]$, and hence
    \[
        m_H \bigl((\Gamma \cap H^0)\backslash H^0\bigr)
        = \int_{1}^{\lambda} \frac{2 dt}{t}
        = 2 \log \lambda,
    \]
    as claimed.
\end{proof}

Combining Theorem~\ref{thm:EM-counting}, Lemmas~\ref{l:kugla-indefinite} and~\ref{l:stab-indef}, and \eqref{eq:GammaG}, we obtain the desired asymptotic for an indefinite nonsplit form $Q$.

\section{Split case}
\label{sec:split}

Finally, we treat the remaining case in Theorem~\ref{thm:main}, namely indefinite \emph{split} forms.
Throughout this section let $n$ be a positive square, and let $Q$ be an integral binary quadratic form of discriminant $4n$.

As before, our goal is to estimate $|(\Gamma\cdot Q)\cap \cF_T|$ for $\cF_T$ as in \eqref{eq:cFT}.
The stabilizer $H$ of $Q$ in $G$ is again conjugate to $\SO(1,1)$.
However, in the split case one has $\Gamma\cap H=\{\pm I\}$, so $\Gamma\cap H$ is not a lattice in the noncompact group $H$.
Consequently, the Eskin--McMullen counting theorem does not apply directly.

Fortunately, our situation fits into the framework developed in a very nice paper by Oh and Shah~\cite{MR3219562}.
Before using their result, we recall the generalized Cartan decomposition \eqref{eq:cartan-decomp} and the associated measures \eqref{eq:cartan-measures}.

After adapting to our setting and normalizations,%
\footnote{In particular, our normalization of the measure on $G$ has density $\cosh(t)/4$ in generalized Cartan coordinates,
whereas Oh and Shah use $\cosh(t)$.
In fact, they write $\sinh(t)$ which is presumably a typo -- in any case, this is inconsequential for their work since both functions are $\sim e^t/2$ as $t \to \infty$.}
\cite[Theorem 6.1]{MR3219562} yields
\begin{equation}
    \label{eq:oh-shah}
    \bigl|(\Gamma\cdot Q) \cap \cF_T\bigr|
    \sim \frac{T \log T \int_\Theta \|k(\theta)\cdot Q_1\|^{-1}\,d\theta}{2 m_G(\Gamma \backslash G)},
\end{equation}
where $Q_1$ is the highest-weight component of $Q$ for the $a(t)$-action (defined below), and
\[
    \Theta
    =  \{\theta\in[0,2\pi)\mid k(\theta)\cdot Q_1 \in \R_{\geq 0}\,\cF_1\}.
\]
Our first step is to relate the integral appearing in \eqref{eq:oh-shah} to the volume $m_{G/H}(\cB_T)$,
a fact clear from the proofs of Oh and Shah but not explicitly stated in \cite[Section 6]{MR3219562}.

As in the proof of Lemma~\ref{l:kugla-indefinite}, we may assume (since only the $G$-orbit matters) that
\[
    Q=[n^{1/2},0,-n^{1/2}].
\]

Consider the norm on $V$ given by
\[
    \|[a,b,c]\| = \max\bigl\{|a|,\tfrac13|b|,|c|\bigr\}.
\]
For forms of discriminant $4n$ and large enough $T$, the condition $|a|,|c|\leq T$ is equivalent to $\|[a,b,c]\|\leq T$.

The space $V$ decomposes into $a(t)$-weight spaces:
for each $\lambda\in\Lambda:=\{-1,0,1\}$ there is a subspace $V_\lambda$ such that
\[
    V=\bigoplus_{\lambda\in\Lambda} V_\lambda
    \quad\text{and $V_\lambda$ is $e^{\lambda t}$-eigenspace of $a(t)$ for all $t$}.
\]
Concretely,
\[
    V_1 = \R[1,0,0],
    \quad
    V_0 = \R[0,1,0],
    \quad
    V_{-1} = \R[0,0,1].
\]
Let $Q_1$ be the $V_1$-component (i.e., highest weight) of $Q$. Then, as $t\to\infty$,
\[
    a(t) \cdot  Q = (1+o(1)) \, e^{t} \, Q_1,
\]
and hence
\[
    k(\theta) a(t) \cdot  Q = (1+o(1)) \, e^{t}\, k(\theta) \cdot  Q_1,
\]
where the error term is uniform in $\theta$ by compactness of $\{k(\theta) \mid \theta \in [0,2\pi]\}$.
In particular, for each fixed $\theta$,
\begin{equation}
    \label{eq:ttheta}
    \|k(\theta) a(t) \cdot  Q\|
    \sim e^t\,\|k(\theta)\cdot  Q_1\|.
\end{equation}

\begin{proposition}
    \label{prop:integral-volume}
    As $T\to\infty$,
    \[
        m_{G/H}(\cB_T)
        \sim
        \frac{T}4 \int_{\Theta}\|k(\theta)\cdot Q_1\|^{-1}\,d\theta.
    \]
\end{proposition}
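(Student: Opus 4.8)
The plan is to evaluate $m_{G/H}(\cB_T)$ directly in the generalized Cartan coordinates $(\theta,t)$ of \eqref{eq:cartan-decomp}, paralleling the proof of Lemma~\ref{l:kugla-indefinite}. Taking $Q=[n^{1/2},0,-n^{1/2}]$ (only the $G$-orbit matters), the stabilizer is \eqref{eq:SO(1,1)-hyperbolic} and the condition $g\cdot Q\in\cF_T$ depends only on the coset $gH$, i.e. on $(\theta,t)$; by \eqref{eq:cartan-measures},
\[
    m_{G/H}(\cB_T)=\iint_{\{(\theta,t)\,:\,k(\theta)a(t)\cdot Q\in\cF_T\}}\frac{\cosh t}{4}\,d\theta\,dt .
\]
As in Lemma~\ref{l:kugla-indefinite}, for this $Q$ one has $a-c=2n^{1/2}\cosh t\cos\theta$ and $b=2n^{1/2}\cosh t\sin\theta$, so the sector conditions $a>c,\ b\geq0$ are independent of $t$ and cut out exactly $\theta\in[0,\pi/2)$; this is precisely the set $\Theta$ of $\theta$ with $k(\theta)\cdot Q_1\in\R_{\geq0}\cF_1$. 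For each such $\theta$ the remaining norm-ball condition confines $t$ to a bounded interval $[t_-(\theta),t_+(\theta)]$.

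Next I would evaluate the inner integral by splitting it at $t=0$:
\[
    \int_{t_-(\theta)}^{t_+(\theta)}\frac{\cosh t}{4}\,dt=\frac{\sinh t_+(\theta)-\sinh t_-(\theta)}{4}.
\]
As $t\to+\infty$ the highest-weight asymptotic \eqref{eq:ttheta} gives $e^{t_+(\theta)}\sim T/\|k(\theta)\cdot Q_1\|$, and the mirror statement as $t\to-\infty$ (the lowest-weight component $Q_{-1}$ now dominating) gives $e^{-t_-(\theta)}\sim T/\|k(\theta)\cdot Q_{-1}\|$. Hence the two ends contribute
\[
    \frac{\sinh t_+(\theta)}{4}\sim\frac18\,\frac{T}{\|k(\theta)\cdot Q_1\|},
    \qquad
    \frac{-\sinh t_-(\theta)}{4}\sim\frac18\,\frac{T}{\|k(\theta)\cdot Q_{-1}\|}.
\]
For our symmetric $Q$ one checks $\|k(\theta)\cdot Q_{-1}\|=\|k(\theta)\cdot Q_1\|$ on $\Theta$ (equivalently, $w=k(\pi)$ satisfies $wa(t)w^{-1}=a(-t)$ and $w\cdot Q=-Q$, so $t\mapsto-t$ preserves both the norm and the measure). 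Integrating over $\theta\in\Theta$ then yields
\[
    m_{G/H}(\cB_T)\sim\frac{T}{8}\cdot 2\int_\Theta\|k(\theta)\cdot Q_1\|^{-1}\,d\theta=\frac{T}{4}\int_\Theta\|k(\theta)\cdot Q_1\|^{-1}\,d\theta,
\]
which is the assertion. The factor $\tfrac14$ (rather than $\tfrac18$) is precisely the contribution of both ends of the $t$-interval.

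The step needing care is making the endpoint asymptotics uniform in $\theta$ and integrable. The estimate \eqref{eq:ttheta} is uniform on the compact family $\{k(\theta)\}$, but to pass from the pointwise $\sinh t_\pm(\theta)$ asymptotics to the integral over $\Theta$ one should rescale (as in Lemma~\ref{l:kugla-indefinite}, via $u=\sinh t$ and then $u=Mv$) and invoke dominated convergence, thereby controlling the behaviour near the norm-ball boundary and discarding the difference between the exact $t_\pm(\theta)$ and their leading values; this is the technical heart of the argument. As a consistency check, the computation of Lemma~\ref{l:kugla-indefinite} (which never uses that $n$ is a nonsquare) gives $m_{G/H}(\cB_T)=\tfrac{T}{2n^{1/2}}+o(T)$, while on $\Theta=[0,\pi/2)$ one has $\|k(\theta)\cdot Q_1\|=n^{1/2}\cos^2(\theta/2)$, whence $\int_\Theta\|k(\theta)\cdot Q_1\|^{-1}\,d\theta=\tfrac{2}{n^{1/2}}$; the two values are consistent with the stated asymptotic.
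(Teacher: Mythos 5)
Your argument is correct and follows essentially the same route as the paper's own sketch: integrate the Cartan density $\cosh(t)/4$ over $\Theta\times[t_-(\theta),t_+(\theta)]$, use \eqref{eq:ttheta} to get $e^{t_+(\theta)}\sim T/\|k(\theta)\cdot Q_1\|$, and double the resulting $T/8$ contribution to account for $t<0$. Your added justifications---that the sector conditions are $t$-independent and cut out exactly $\Theta=[0,\pi/2)$, that $\|k(\theta)\cdot Q_{-1}\|=\|k(\theta)\cdot Q_1\|$ so the two ends contribute equally, and the numerical consistency check against Lemma~\ref{l:kugla-indefinite}---are all correct and merely make explicit what the paper leaves implicit.
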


\begin{proof}
    [Sketch proof]
    Variants of this statement appear in the literature (for instance in much greater generality in \cite{MR2488484}), 
    so we only indicate the main idea.
    We focus on the contribution from the region $t\geq 0$; the region $t<0$ is analogous.

    For large $t$, the direction of $k(\theta)a(t)\cdot Q$ is governed by $k(\theta)\cdot Q_1$, in the sense that
    \[
        \frac{k(\theta)a(t)\cdot Q}{\|k(\theta)a(t)\cdot Q\|}
        \to
        \frac{k(\theta)\cdot Q_1}{\|k(\theta)\cdot Q_1\|}.
    \]
    Therefore only $\theta\in\Theta$ contribute to the main term, and for each such $\theta$ we need to integrate over those $t\geq 0$ for which $\|k(\theta)a(t)\cdot Q\|\leq T$.
    Writing
    \[
        t_T(\theta):=\sup\{t\geq 0 \mid \|k(\theta)a(t)\cdot Q\|\leq T\},
    \]
    the asymptotic \eqref{eq:ttheta} implies
    \[
        e^{t_T(\theta)}\sim \frac{T}{\|k(\theta)\cdot Q_1\|}.
    \]
    Using the density $\cosh(t)/4$ from \eqref{eq:cartan-measures}, we obtain
    \[
        m_{G/H}(\cB_T)
        \sim \frac14 \int_{\theta\in\Theta} \int_{t=0}^{t_T(\theta)} \cosh(t)\,dt\,d\theta
        \sim \frac{T}8 \int_{\theta\in\Theta} \|k(\theta)\cdot Q_1\|^{-1} \,d\theta.
    \]
    The contribution from $t<0$ is the same, giving the stated factor $T/4$.
\end{proof}

Combining Proposition~\ref{prop:integral-volume} with \eqref{eq:oh-shah} yields
\[
    \bigl|(\Gamma\cdot Q) \cap \cF_T\bigr|
    \sim \frac{2 \log T \cdot m_{G/H}(\cB_T)}{m_G(\Gamma \backslash G)}.
\]
Invoking Lemma~\ref{l:kugla-indefinite} and \eqref{eq:GammaG} now gives the asymptotic in the split case, completing the proof of Theorem~\ref{thm:mainF}.

\section*{Acknowledgments}
G.D.\ and M.K.\ were supported by the Croatian Science Foundation under the project no.\ IP-2022-10-5008 (TEBAG).
R.M.\ was supported by the Croatian Science Foundation under the project no.\ HRZZ-IP-2022-10-5116 (FANAP).
M.K.\ acknowledges support from the project “Implementation of cutting-edge research and its application as part of the Scientific Center of Excellence for Quantum and Complex Systems, and Representations of Lie Algebras”, Grant No.\ PK.1.1.10.0004, co-financed by the European Union through the European Regional Development Fund -- Competitiveness and Cohesion Programme 2021-2027.

\printbibliography

\end{document}